\documentclass{article}
\usepackage[english]{babel}
\usepackage[latin1]{inputenc}
\usepackage{latexsym}
\usepackage{amssymb,amsmath,dsfont}
\usepackage{mathrsfs}
\usepackage{subfigure}
\usepackage{graphicx,float}
\usepackage{color}
\usepackage{url}
\usepackage{authblk}

\usepackage{array}
\usepackage[table]{xcolor}
\usepackage{amsthm}
\usepackage{a4}
\usepackage{fullpage}
\usepackage[unicode]{hyperref}
\hypersetup{
    bookmarks=true,         
    unicode=true,          
    pdftoolbar=true,        
    pdfmenubar=true,        
    pdffitwindow=true,      
    pdftitle={Wave delay},    
    pdfauthor={Stéphane Gerbi and Belkacem Said-Houari},     
    pdfsubject={},   
    pdfcreator={},   
    pdfproducer={}, 
    pdfkeywords={}, 
    pdfnewwindow=true,      
    colorlinks=true,       
    linkcolor=blue,  
    citecolor=blue,        
    filecolor=blue,      
    urlcolor=blue           
}
\newtheorem{theorem}{Theorem}[section]

\newtheorem{lemma}{Lemma}[section]

\newtheorem{remark}{Remark}[section]
\newtheorem*{ack}{Acknowledgments}

\newcommand{\findem}{\begin{flushright} \vspace*{-0.25cm} $\square$ \end{flushright}}
%
 \title{Existence and exponential stability of a damped wave equation with
dynamic boundary conditions and a delay term}
\author[1]{{\sc{Stéphane Gerbi}}\thanks{\url{stephane.gerbi@univ-savoie.fr}}}
\author[2]{{\sc{Belkacem Said-Houari}}\thanks{\url{saidhouarib@yahoo.fr}}}
\affil[1]{\small{Laboratoire de Mathématiques, UMR 5127 - CNRS and Université de Savoie, \par73376 Le Bourget-du-Lac Cedex, France.}}
\affil[2]{\small{Division of Mathematical and Computer Sciences and Engineering, \par King Abdullah University of Science and Technology (KAUST), Thuwal, Saudi Arabia}}
\begin{document}
\date{}
\maketitle

\begin{abstract}
In this paper we consider a multi-dimensional wave equation with dynamic
boundary conditions related to the Kelvin-Voigt damping and a delay term
acting on the boundary. If the weight of the delay term in the feedback is
less than the weight of the term without delay or if it is greater under an
assumption between the damping factor, and the difference of the two
weights, we prove the global existence
of the solutions. Under the same assumptions, the exponential stability of the system is proved using an
appropriate Lyapunov functional. More precisely, we  show that even when
the weight of the delay is greater than the weight of the damping in the
boundary conditions, the strong damping term still provides exponential
stability for the system.
\end{abstract}
\textbf{Keywords}: Damped wave equations, boundary delay;  global solutions;  exponential stability;  Kelvin-Voigt damping;  dynamic boundary conditions.
\section{Introduction}

In this paper we consider the following linear damped wave equation with
dynamic boundary conditions and a delay boundary term:
\begin{equation}
\left\{
\begin{array}{ll}
u_{tt}-\Delta u-\alpha \Delta u_{t}=0, & x\in \Omega ,\ t>0 \,,\,\\[0.1cm]
u(x,t)=0, & x\in \Gamma _{0},\ t>0 \,,\,\\[0.1cm]
u_{tt}(x,t)=-\left( \displaystyle\frac{\partial u}{\partial \nu }(x,t)+\frac{\alpha \partial u_{t}}{\partial \nu }(x,t)+\mu_{1}u_{t}(x,t)+
\mu_{2}u_{t}(x,t-\tau )\right) & x\in \Gamma_{1},\ t>0 \,,\,\\[0.1cm]
u(x,0)=u_{0}(x) & x\in \Omega \,,\,\\[0.1cm]
u_{t}(x,0)=u_{1}(x) & x\in \Omega \,,\, \\[0.1cm]
u_{t}(x,t-\tau)=f_{0}(x,t-\tau ) & x\in\Gamma_{1},\, t\in (0,\tau)\,,\,
\end{array}%
\right.  \label{ondes}
\end{equation}
where $u=u(x,t)\,,\,t\geq 0\,,\,x\in \Omega \,,\,\Delta $ denotes the
Laplacian operator with respect to the $x$ variable, $\Omega $ is a regular
and bounded domain of $\mathbb{R}^{N}\,,\,(N\geq 1)$, $\partial\Omega
~=~\Gamma _{0}~\cup ~\Gamma _{1}$, $mes(\Gamma _{0})>0,$ $\overline{\Gamma}_{0}\cap \overline{\Gamma}_{1}=\varnothing $ 
and $\displaystyle\frac{\partial }{\partial \nu }$
denotes the unit outer normal derivative, $\,\alpha,\,\mu _{1}$ and $\mu
_{2} $ are positive constants. Moreover, $\tau >0$ represents the time delay
and $u_{0}\,,\,u_{1},~f_{0}$ are given functions belonging to suitable
spaces that will be precised later.

This type of problems arise (for example) in modelling of longitudinal
vibrations in a homogeneous bar in which there are viscous effects. The term
$\Delta u_t$, indicates that the stress is proportional not only to the
strain, but also to the strain rate. See \cite{CSh_76}. From the
mathematical point of view, these problems do not neglect acceleration terms
on the boundary. Such type of boundary conditions are usually called \textit{%
dynamic boundary conditions}. They are not only important from the
theoretical point of view but also arise in several physical applications.
For instance in one space dimension, problem (\ref{ondes}) can modelize the
dynamic evolution of a viscoelastic rod that is fixed at one end and has a
tip mass attached to its free end. The dynamic boundary conditions
represents the Newton's law for the attached mass, (see \cite{BST64,AKS96,
CM98} for more details). In the two dimension space, as showed in \cite{G06}
and in the references therein, these boundary conditions arise when we
consider the transverse motion of a flexible membrane $\Omega $ whose
boundary may be affected by the vibrations only in a region. Also some
dynamic boundary conditions as in problem (\ref{ondes}) appear when we
assume that $\Omega $ is an exterior domain of $\mathbb{R}^{3} $ in which
homogeneous fluid is at rest except for sound waves. Each point of the
boundary is subjected to small normal displacements into the obstacle (see
\cite{B76} for more details). This type of dynamic boundary conditions are
known as acoustic boundary conditions.

In the absence of the delay term (i.e. $\mu_{2} =0$) problem (\ref{ondes})
 has been investigated by many authors
in recent years (see, e.g., \cite{GS082}, \cite{GS08}, \cite{G94_1}, \cite%
{G94_2}, \cite{P08}, \cite{PS04}).

Among the early results dealing with the \textit{dynamic boundary conditions} are
those of Grobbelaar-Van Dalsen \cite{GV94,GV96} in which the author has made
contributions to this field.

In \cite{GV94} the author introduced a model which describes the damped
longitudinal vibrations of a homogeneous flexible horizontal rod of length $%
L $ when the end $x = 0$ is rigidly fixed while the other end $x=L$ is free
to move with an attached load. This yields to a system of two second order
equations of the form
\begin{equation}  \label{Vad_1}
\left\{
\begin{array}{ll}
u_{tt}-u_{xx}-u_{txx}=0, & x\in (0,L),\,t>0, \\[0.1cm]
u(0,t)=u_{t}(0,t)=0, & t>0, \\[0.1cm]
u_{tt}(L,t)=-\left[ u_{x}+u_{tx}\right] (L,t), & t>0, \\[0.1cm]
u\left( x,0\right) =u_{0}\left( x\right) ,u_{t}\left( x,0\right)
=v_{0}\left( x\right), & x\in (0,L), \\
u\left( L,0\right) =\eta ,\qquad u_{t}\left( L,0\right) =\mu. &
\end{array}
\right.
\end{equation}
By rewriting problem (\ref{Vad_1}) within the framework of the abstract
theories of the so-called $B$-evolution theory, an existence of a unique
solution in the strong sense has been shown. An exponential decay result was
also proved in \cite{GV96} for a problem related to (\ref{Vad_1}), which
describe the weakly damped vibrations of an extensible beam. See \cite{GV96}
for more details.

Subsequently, Zang and Hu \cite{ZH07}, considered the problem
\begin{equation*}
   u_{tt}-p\left( u_{x}\right) _{xt}-q\left( u_{x}\right) _{x}=0,\qquad x\in \left(0,1\right) ,\,t>0
\end{equation*}
with
\begin{equation*}
 u\left( 0,t\right) =0,\qquad   p\left( u_{x}\right) _{t}+q\left( u_{x}\right) \left( 1,t\right)
+ku_{tt}\left( 1,t\right) =0,\, t\geq 0.
\end{equation*}
By using the Nakao inequality, and under appropriate conditions on $p$ and $%
q $, they established both exponential and polynomial decay rates for the
energy depending on the form of the terms $p$ and $q$.

It is clear that in the absence of the delay term and for $\mu_1=0$, problem
(\ref{Vad_1}) is the one dimensional model of (\ref{ondes}). Similarly, and
always in the absence of the delay term, Pellicer and Sol\`{a}-Morales \cite%
{PS04} considered the one dimensional problem of (\ref{ondes}) as an
alternative model for the classical spring-mass damper system, and by using
the dominant eigenvalues method, they proved that their system
has the classical second order differential equation
\begin{equation*}
m_{1}u^{\prime \prime }(t)+d_{1}u^{\prime }(t)+k_{1}u(t)=0,
\end{equation*}%
as a limit, where the parameter $m_{1}\,,\,d_{1}\mbox{ and }k_{1}$ are
determined from the values of the spring-mass damper system. Thus, the
asymptotic stability of the model has been determined as a consequence of
this limit. But they did not obtain any rate of convergence. See also  \cite{P08,PS08} for related results.

Recently, the present authors studied in \cite{GS08} and \cite{GS082} a
more general situation of (\ref{ondes}). They considered problem (\ref%
{ondes}) with $\mu_2=0$, a nonlinear damping of the form $g\left(u_{t}\right)~=~\left%
\vert u_{t}\right\vert^{m-2}u_{t}$ instead of $\mu_{1}u_{t}$ and a nonlinear
source term $f\left(u\right)~=~\left\vert u\right\vert^{p-2}u_{t}$ in the
right hand side of the first equation of problem (\ref{ondes}). A local
existence result was obtained by combining the Faedo-Galerkin method with
the contraction mapping theorem. Concerning the asymptotic behavior, the
authors showed that the solution of such problem is unbounded and grows up
exponentially when time goes to infinity if the initial data are large
enough and the damping term is nonlinear. The blow up result was shown when
the damping is linear. Also, we proved in \cite{GS082} that under some
restrictions on the exponents $m$ and $p$, we can always find initial data
for which the solution is global in time and decay exponentially to zero.

The main difficulty of the problem considered is related to the non ordinary
boundary conditions defined on $\Gamma _{1}$. Very little attention has been
paid to this type of boundary conditions.  We mention only a few particular
results in the one dimensional space \cite{GV99,PS04,DL02,K92}.

The purpose of this paper is to study problem (\ref{ondes}), in which a
delay term acted in the dynamic boundary conditions. In recent years one
very active area of mathematical control theory has been the investigation
of the delay effect in the stabilization of hyperbolic systems and many
authors have shown that delays can destabilize a system that is
asymptotically stable in the absence of delays (see \cite{DLP86} for more
details).

In \cite{NP06}, Nicaise and Pignotti examined the wave equation with a linear
boundary damping term with a delay. Namely, they looked to the following
problem:
\begin{equation}\label{Wave_equation}
    u_{tt}-\Delta u=0, \quad x\in \Omega ,\, t>0,
\end{equation}
where $\Omega$ is a bounded domain with smooth boundary $\partial\Omega=\Gamma _{0}\cup \Gamma _{1}$. On $\Gamma _{0}$, they
considered the Dirichlet boundary conditions. While on $\Gamma _{1}$ they assumed the following boundary conditions:
\begin{equation}\label{Boundary_Nicaise}
 \frac{\partial u}{\partial \nu }(x,t)=\mu_{1}u_{t}(x,t)+\mu_{2}u_{t}(x,t-\tau ),\quad x\in \Gamma _{1},\ t>0.
\end{equation}
They proved under the assumption
\begin{equation}
\mu _{2}<\mu _{1}  \label{coeff}
\end{equation}%
 that the solution is exponentially stable. On
the contrary, if (\ref{coeff}) does not hold, they found a sequence of
delays for which the corresponding solution of (\ref{Wave_equation}) will be
unstable. The main approach used in \cite{NP06}, is an observability
inequality obtained with a Carleman estimate. The same results were showed
if both the damping and the delay are acting in the domain. We also recall
the result by Xu, Yung and Li \cite{XYL06}, where the authors proved the
same result as in \cite{NP06} for the one space dimension by adopting the
spectral analysis approach.
We point out that problem (\ref{ondes}) has been already studied by Nicaise and Pignotti in \cite{NPig_2011} for $\mu_1=0$ and a time-varying delay. They find the same condition as the one
used in this paper when $\mu_{1} = 0$ by a different way. However our result 
and our Lyapunov functional  are slightly different here. (See Remark \ref{Remark_Nicaise_work} for more details), and we want to point out that this paper may be viewed
as a continuation of the work  of Nicaise 	and Pignotti  \cite{NPig_2011} in which an additional damping term acts on the boundary and the study of the competition between
these two damping terms is very interesting.

As it has been proved by Datko \cite[Example 3.5]{Dat91}, systems of the
form
\begin{equation}  \label{stron_datko}
\left\{
\begin{array}{ll}
w_{tt}-w_{xx}-aw_{xxt}=0, & x\in(0,1),\,t>0, \vspace{0.3cm} \\
w\left( 0,t\right) =0,\qquad w_{x}\left( 1,t\right) =-kw_{t}\left( 1,t-\tau
\right), & t>0,%
\end{array}%
\right.
\end{equation}
where $a,\,k$ and $\tau$ are positive constants become unstable for an
arbitrarily small values of $\tau$ and any values of $a$ and $k$. In (\ref%
{stron_datko}) and even in the presence of the strong damping $-aw_{xxt}$,
without any other damping, the overall structure can be unstable. This was
one of the main motivations for considering problem (\ref{ondes}). (Of
course the structure of problem (\ref{ondes}) and (\ref{stron_datko}) are
different due to the nature of the boundary conditions in each problem).

Subsequently, Datko et \emph{al} \cite{DLP86} treated the following one
dimensional problem:
\begin{equation}
\left\{
\begin{array}{ll}
u_{tt}(x,t)-u_{xx}(x,t)+2au_{t}(x,t)+a^{2}u(x,t)=0, & 0<x<1,\,\ t>0,\vspace{%
0.3cm} \\
u(0,t)=0, & t>0,\vspace{0.3cm} \\
u_{x}(1,t)=-ku_t(1,t-\tau ), & t>0,%
\end{array}%
\right.  \label{Dakto_system}
\end{equation}%
which models the vibrations of a string clamped at one end and free at the
other end, where $u(x,t)$ is the displacement of the string. Also, the
string is controlled by a boundary control force (with a delay) at the free
end. They showed that, if the positive constants $a$ and $k$ satisfy
\begin{equation*}
k\frac{e^{2a}+1}{e^{2a}-1}<1,
\end{equation*}%
then the delayed feedback system (\ref{Dakto_system}) is stable for all
sufficiently small delays. On the other hand if
\begin{equation*}
k\frac{e^{2a}+1}{e^{2a}-1}>1,
\end{equation*}
then there exists a dense open set $D$ in $(0,\infty)$ such that for each $%
\tau\in D$, system (\ref{Dakto_system}) admits exponentially unstable
solutions.

As a consequence of what we have said before, two main questions naturally
arise here:

\begin{itemize}
\item Is it possible for the damping term $-\Delta u_t$ to stabilize system (%
\ref{ondes}) when the weight of the delay is greater than the weight of the
boundary damping (i.e. when $\mu_2\geq\mu_1$)?

\item Does the particular structure of the problem prevents the instability
result obtained in \cite{Dat91} for problem (\ref{stron_datko})?
\end{itemize}

One of the main purpose of this paper is to give positive answers to the
above two questions. More precisely, we study the asymptotic behavior (as $%
t\rightarrow \infty$) and related decay rates for the corresponding
solutions of system (\ref{ondes}) where the question to be addressed here is
whether the delay term $\mu _{2}u_{t}(x,t-\tau )$ can destroy the stability
of the system, which is exponentially stable in the absence of that delay
\cite{GS082}. As we shall see below, the presence of the strong damping term
$\alpha\Delta u_{t}$ in (\ref{ondes}) plays a decisive role in the stability
of the whole system if (\ref{coeff}) does not hold. Thanks to the energy
method, we built appropriate Lyapunov functionals lead to stability results.

The paper is organized as follows: in the next section, we  prove the
global existence of the solutions by using the Lumer-Phillips' theorem
in the same way as  in \cite{NP06}. In  section 3, we  show that if the weight
of the delay is less than the weight of the damping, then the energy defined by (\ref{energy})
decays exponentially to zero. We also prove that even if
the weight of the delay is greater than the weight of the damping, the
solution still decays to zero exponentially provided that the damping
parameter $\alpha$ satisfies an appropriate condition. Let us mention that
without the damping factor $\alpha$, Nicaise and Pignotti \cite{NP06} proved
the instability of the null stationary solution in the case $\mu_{2} \geq
\mu_{1}$, whereas we will show that if $\mu_{2} \geq \mu_{1}$, by adding a
condition of the form $\alpha > (\mu_{2} - \mu_{1}) B^{2} $ (with $B$ a
constant defined later), we are able to prove the stability of the null
stationary state thanks to a suitable choice of a Lyapunov function.


\section{Well-posedness of Problem (\protect\ref{ondes}).}

In this section we will first transform the delay
boundary conditions by adding a new unknown. Then as in \cite{NP06},
we will use the Lumer-Phillips' theorem to prove the existence and uniqueness
of the solution of  problem (\ref{ondes}).


\subsection{Setup and notations}

 We denote
\begin{equation*}
H_{\Gamma_{0}}^{1}(\Omega) =\left\{u \in H^1(\Omega) /\ u_{\Gamma_{0}} =
0\right\} .
\end{equation*}

We set $\gamma_{1}$ the trace operator from $H_{\Gamma_{0}}^{1}(\Omega) $ on $L^{2}(\Gamma_{1})$ and
$H^{1/2}(\Gamma_{1}) = \gamma_{1}\big(H_{\Gamma_{0}}^{1}(\Omega)\big)$. 
\textcolor{black}{
We denote by $B$ the norm of $\gamma_{1}$ namely:
\begin{equation*}
\forall u \in H_{\Gamma_{0}}^{1}\left( \Omega \right) \,,\, \Vert u\Vert
_{2,\Gamma_{1}}\leq B\Vert \nabla u\Vert_{2} \; .  \label{trace*}
\end{equation*}
}
We recall that $H^{1/2}(\Gamma_{1})$ is
dense in $L^{2}(\Gamma_{1})$ (see  \cite{LM68}).

We denote $E(\Delta,L^{2}(\Omega))= \{u \in H^{1}(\Omega)\, \mbox{such that } \Delta u  \in L^{2}(\Omega)\}$ and recall that for a function
$u \in E(\Delta,L^{2}(\Omega)) \,,\, \displaystyle \frac{\partial u }{\partial \nu} \in  H^{- 1/2}(\Gamma_{1})$ and the next Green's formula is valid (see  \cite{LM68}):
\begin{equation}\label{Green}
 \displaystyle \int_{\Omega} \nabla u(x) \nabla v(x) dx =  \displaystyle \int_{\Omega} - \Delta u(x) v(x) dx + \left\langle\frac{\partial u }{\partial \nu};v \right\rangle_{\Gamma_{1}},\,
 \forall v \in  H_{\Gamma_{0}}^{1}(\Omega),
 \end{equation}
where $\left\langle . ;. \right\rangle_{\Gamma_{1}}$ means the duality pairing between  $H^{- 1/2}(\Gamma_{1})$ and $H^{1/2}(\Gamma_{1})$.

By $( .,.) $ we denote the scalar product in $L^{2}( \Omega)$ i.e. $(u,v)
= \displaystyle \int_{\Omega} u(x) v(x) dx$. Also we mean by $\Vert
.\Vert_{q}$ the $L^{q}(\Omega) $ norm for $1 \leq q \leq \infty$, and by $%
\Vert .\Vert_{q,\Gamma_{1}}$ the $L^{q}(\Gamma_{1}) $ norm.

Throughout the paper, we use the standard notations as in the book \cite{B83} for example.

In order to prove the local existence of the solution of problem (\ref{ondes}%
), we consider the following two cases :

\begin{description}
\item[\textbf{case 1:}] $\mu _{2}<\mu _{1}$. We may define a positive real
number $\xi $ such that:
\begin{equation}
\tau \mu _{2}\leq \xi \leq \tau \left( 2\mu _{1}-\mu _{2}\right) \;.
\label{cond_1}
\end{equation}

\item[\textbf{case 2:}] $\mu_{2} \geq \mu _{1}$. We will suppose that the
damping parameter $\alpha$ verifies:
\begin{equation}  \label{condalpha}
\alpha > (\mu_{2} - \mu_{1}) B^{2} \; .
\end{equation}
\end{description}

In this case, we may define a positive real number $\xi $ satisfying the two
inequalities:
\begin{eqnarray}
\xi &\geq &\tau \mu _{2}\,,  \label{cond_1b} \\
\alpha &>&\left( \frac{\mu _{2}}{2}+\frac{\xi }{2\tau }-\mu _{1}\right)
B^{2}>0\;.  \label{cond_2}
\end{eqnarray}


\subsection{Semigroup formulation of the problem}

In this section, we  prove the global existence and the uniqueness of
the solution of problem (\ref{ondes}).
To overcome the problem of the boundary delay, we introduce, as in \cite{NP06}, the new variable:
\begin{equation}
z\left( x,\rho ,t\right) =u_{t}\left( x,t-\tau \rho \right) ,\ x\in \Gamma
_{1},\ \rho \in \left( 0,1\right) ,\ t>0.  \label{change-variable}
\end{equation}%
Then, we have
\begin{equation}
\tau z_{t}\left( x,\rho ,t\right) +z_{\rho }\left( x,\rho ,t\right) =0,~%
\text{in }\Gamma _{1}\times \left( 0,1\right) \times \left( 0,+\infty
\right) .  \label{equation-z}
\end{equation}%
Therefore, problem (\ref{ondes}) is equivalent to:
\begin{equation}
\left\{
\begin{array}{ll}
u_{tt}-\Delta u-\alpha \Delta u_{t}=0, & x\in \Omega ,\ t>0\,,\\[0.1cm]
\tau z_{t}(x,\rho ,t)+z_{\rho }(x,\rho ,t)=0, & x\in \Gamma _{1},\rho \in(0,1)\,,\,t>0 \,,\\[0.1cm]
u(x,t)=0, & x\in \Gamma _{0},\ t>0 \,,\\[0.1cm]
u_{tt}(x,t)=-\left( \displaystyle\frac{\partial u}{\partial \nu }(x,t)+\alpha \frac{\partial u_{t}}{\partial \nu }(x,t)+
\mu_{1}u_{t}(x,t)+\mu _{2}z(x,1,t)\right), & x\in \Gamma _{1},\ t>0 \,,\\[0.1cm]
z(x,0,t)=u_{t}(x,t), & x\in \Gamma _{1},\ t>0 \,,\\[0.1cm]
u(x,0)=u_{0}(x), & x\in \Omega \,,\\
u_{t}(x,0)=u_{1}(x), & x\in \Omega \,,\\
z(x,\rho ,0)=f_{0}(x,-\tau \rho ), & x\in \Gamma _{1},\,\rho \in (0,1) \ .%
\end{array}%
\right.  \label{wave2}
\end{equation}

The first natural question is the existence of solutions of the problem (\ref{wave2}).
In this section we will give a sufficient condition that guarantees the well-posedness of the problem.

For this purpose, as in \cite{NP06}, we will use a semigroup formulation of the
initial-boundary value problem (\ref{wave2}). 
\textcolor{black}{
If we denote $V:=\left( u,u_{t},\gamma _{1}(u_{t}),z\right)^{T}$, we define the energy space:
\begin{equation*}
\mathscr{H}=H_{\Gamma _{0}}^{1}(\Omega )\times L^{2}\left( \Omega \right) \times L^{2}(\Gamma _{1})\times L^{2}(\Gamma_{1} \times (0,1)).
\end{equation*}%
Clearly, $\mathscr{H}$ is a Hilbert space with respect to the inner product%
\begin{equation*}
\left\langle V_{1},V_{2}\right\rangle _{\mathscr{H}}=\int_{\Omega }\nabla
u_{1}.\nabla u_{2}dx+\int_{\Omega }v_{1}v_{2}dx+\int_{\Gamma _{1}}w_{1}w_{2} d\sigma +\xi \int_{\Gamma _{1}}\int_{0}^{1}z_{1}z_{2}d\rho d\sigma
\end{equation*}%
for $V_{1}=(u_{1},v_{1},w_{1},z_{1})^{T}$, $V_{2}=(u_{2},v_{2},w_{2},z_{2})^{T}$ and $\xi $ is defined by (\ref{cond_1}) or (\ref{cond_1b}).\\
Therefore, if $V_{0} \in \mathscr{H} \mbox{ and } V \in \mathscr{H}$, the problem (\ref{wave2}) is formally 
equivalent to the following abstract evolution equation in the Hilbert space $\mathscr{H}$:
}
\begin{equation}
\left\{
\begin{array}{ll}
V'(t)=  \mathscr{A}V(t) , & t>0, \vspace{0.1cm}\\
V\left( 0\right) =V_{0}, &
\end{array}
\right.  \label{Matrix_problem}
\end{equation}
where $'$ denotes the derivative with respect to time $t$, $V_{0}:=\left( u_{0},u_{1},\gamma_{1}(u_{1}),f_{0}(.,-.\tau)\right)^{T}$
and the operator $\mathscr{A}$ is defined by:
\begin{equation*}
\mathscr{A}\left(
\begin{array}{c}
u\vspace{0.2cm} \\
v\vspace{0.2cm} \\
w\vspace{0.2cm} \\
z
\end{array}
\right) =\left(
\begin{array}{c}
\displaystyle v\vspace{0.2cm} \\
\displaystyle \Delta u+\alpha \Delta v\vspace{0.2cm} \\
\displaystyle -\frac{\partial u}{\partial \nu }-\alpha \frac{\partial v}{\partial \nu }-\mu _{1}v-\mu _{2}z\left( .,1\right) \vspace{0.2cm}\\
\displaystyle -\frac{1}{\tau }z_{\rho }
\end{array}%
\right).
\end{equation*}%

The domain of $\mathscr{A}$ is the set of $V=(u,v,w,z)^{T}$ such that:
\begin{eqnarray}
(u,v,w,z)^{T}\in H_{\Gamma_{0}}^{1}(\Omega)\times H_{\Gamma_{0}}^{1}(\Omega)\times L^{2}(\Gamma_{1})\times L^{2}\left(\Gamma_{1};H^{1}(0,1)\right) ,&& \label{domainA1}\vspace*{0.3cm} \\
\displaystyle u+\alpha v \in E(\Delta,L^{2}(\Omega)) \,,\, \frac{\partial (u+\alpha v)}{\partial \nu } \in L^{2}(\Gamma_{1}), && \label{domainA2} \vspace*{0.3cm}\\
w=\gamma_{1}(v) = \,z(.,0) \text{ on }\Gamma_{1}.&& \label{domainA3}
\end{eqnarray}

The well-posedness of problem (\ref{wave2}) is ensured by:
\begin{theorem}\label{existence_u}
Suppose that $\mu _{2}\geq\mu _{1}$ and $\alpha >(\mu_{2}-\mu _{1})B^{2}$  or $\mu _{2} < \mu _{1}$. Let $V_{0}\in \mathscr{H}$, then there
exists a unique solution $V\in C\left( \mathbb{R}_{+};\mathscr{H}%
\right) $ of problem (\ref{Matrix_problem}). Moreover, if $V_{0}\in %
\mathscr{D}\left( \mathscr{A}\right) $, then
\begin{equation*}
V\in C\left( \mathbb{R}_{+};\mathscr{D}\left( \mathscr{A}\right) \right)
\cap C^{1}\left( \mathbb{R}_{+};\mathscr{H}\right) .
\end{equation*}
\end{theorem}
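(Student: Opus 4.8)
The plan is to apply the Lumer--Phillips theorem to $\mathscr{A}$, that is, to prove that $\mathscr{A}$ is densely defined, dissipative, and that $I-\mathscr{A}$ is surjective on $\mathscr{H}$. Once this is done, $\mathscr{A}$ generates a $C_{0}$-semigroup of contractions $(S(t))_{t\geq 0}$, and the two regularity assertions follow from the standard semigroup solution theory: $V(t)=S(t)V_{0}$ is the mild solution in $C(\mathbb{R}_{+};\mathscr{H})$ for $V_{0}\in\mathscr{H}$, and it is the classical solution in $C(\mathbb{R}_{+};\mathscr{D}(\mathscr{A}))\cap C^{1}(\mathbb{R}_{+};\mathscr{H})$ when $V_{0}\in\mathscr{D}(\mathscr{A})$.

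\textbf{Dissipativity.} For $V=(u,v,w,z)^{T}\in\mathscr{D}(\mathscr{A})$ I would compute $\langle\mathscr{A}V,V\rangle_{\mathscr{H}}$ directly. Applying Green's formula (\ref{Green}) to $\int_{\Omega}(\Delta u+\alpha\Delta v)v\,dx$ produces the pairing $\langle\partial_{\nu}(u+\alpha v),\gamma_{1}(v)\rangle_{\Gamma_{1}}$, which cancels exactly against the boundary contribution coming from the third component of $\mathscr{A}V$ (this is where $w=\gamma_{1}(v)$ from (\ref{domainA3}) is used). Integrating the transport term by parts in $\rho$ and using $z(\cdot,0)=w$, I expect to reach
$$\langle\mathscr{A}V,V\rangle_{\mathscr{H}}=-\alpha\|\nabla v\|_{2}^{2}-\mu_{1}\|w\|_{2,\Gamma_{1}}^{2}-\mu_{2}\int_{\Gamma_{1}}z(\cdot,1)\,w\,d\sigma+\frac{\xi}{2\tau}\|w\|_{2,\Gamma_{1}}^{2}-\frac{\xi}{2\tau}\int_{\Gamma_{1}}z^{2}(\cdot,1)\,d\sigma.$$
Bounding the mixed term by Young's inequality, $-\mu_{2}\int_{\Gamma_{1}}z(\cdot,1)w\,d\sigma\leq\frac{\mu_{2}}{2}\|w\|_{2,\Gamma_{1}}^{2}+\frac{\mu_{2}}{2}\int_{\Gamma_{1}}z^{2}(\cdot,1)\,d\sigma$, leaves the coefficient $\bigl(\frac{\mu_{2}}{2}-\frac{\xi}{2\tau}\bigr)$ in front of $\int_{\Gamma_{1}}z^{2}(\cdot,1)$ and $\bigl(\frac{\mu_{2}}{2}+\frac{\xi}{2\tau}-\mu_{1}\bigr)$ in front of $\|w\|_{2,\Gamma_{1}}^{2}$. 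In case 1 the range (\ref{cond_1}) makes both coefficients nonpositive, so dissipativity is immediate. In case 2, (\ref{cond_1b}) again kills the $z^{2}(\cdot,1)$ term, while the possibly positive $\|w\|_{2,\Gamma_{1}}^{2}$ term is controlled by the strong damping: the trace bound $\|w\|_{2,\Gamma_{1}}^{2}\leq B^{2}\|\nabla v\|_{2}^{2}$ together with (\ref{cond_2}) makes the total coefficient of $\|\nabla v\|_{2}^{2}$ negative, so $\langle\mathscr{A}V,V\rangle_{\mathscr{H}}\leq 0$.

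\textbf{Maximality.} For surjectivity of $I-\mathscr{A}$ I would fix $F=(f_{1},f_{2},f_{3},f_{4})^{T}\in\mathscr{H}$ and solve $(I-\mathscr{A})V=F$ componentwise. The first equation gives $v=u-f_{1}$; the fourth is a first-order linear ODE in $\rho$, namely $z_{\rho}+\tau z=\tau f_{4}$ with $z(\cdot,0)=\gamma_{1}(v)$, solved explicitly by an integrating factor, which in particular writes $z(\cdot,1)$ as $e^{-\tau}\gamma_{1}(u)$ plus data depending only on $f_{1},f_{4}$. Substituting $v=u-f_{1}$ and this expression for $z(\cdot,1)$ into the remaining equations reduces everything to an elliptic problem for $u$ alone: find $u\in H_{\Gamma_{0}}^{1}(\Omega)$ such that $B(u,\phi)=L(\phi)$ for all $\phi\in H_{\Gamma_{0}}^{1}(\Omega)$, where $B(u,\phi)=(1+\alpha)\int_{\Omega}\nabla u\cdot\nabla\phi\,dx+\int_{\Omega}u\phi\,dx+(1+\mu_{1}+\mu_{2}e^{-\tau})\int_{\Gamma_{1}}\gamma_{1}(u)\gamma_{1}(\phi)\,d\sigma$ and $L$ is a bounded functional assembled from the data (the ill-defined term $\alpha\Delta f_{1}$ stays inside $L$ as $\alpha\int_{\Omega}\nabla f_{1}\cdot\nabla\phi$ and is never evaluated pointwise). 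Since $B$ is continuous and coercive on $H_{\Gamma_{0}}^{1}(\Omega)$ — coercivity holds because the first two terms have strictly positive coefficients and the boundary term is nonnegative — and $L$ is continuous by the trace inequality, Lax--Milgram yields a unique $u$. Testing first against $\phi\in C_{c}^{\infty}(\Omega)$ recovers $\Delta(u+\alpha v)=v-f_{2}\in L^{2}(\Omega)$, so $u+\alpha v\in E(\Delta,L^{2}(\Omega))$; testing then against general $\phi$ and using (\ref{Green}) identifies $\partial_{\nu}(u+\alpha v)\in L^{2}(\Gamma_{1})$ and the boundary equation, so the assembled $V=(u,u-f_{1},\gamma_{1}(u-f_{1}),z)^{T}$ lies in $\mathscr{D}(\mathscr{A})$ with $(I-\mathscr{A})V=F$. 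Density of $\mathscr{D}(\mathscr{A})$ in $\mathscr{H}$ is routine.

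The step I expect to be the main obstacle is maximality: one must correctly untangle the coupling of the four unknowns — eliminating $v$ and the transport variable $z$ so as to land on a single elliptic problem — and choose the weak formulation so that the low regularity of the datum $f_{1}\in H_{\Gamma_{0}}^{1}(\Omega)$ (in particular the distributional $\Delta f_{1}$) is absorbed into the bounded functional $L$ rather than appearing as a pointwise source. The dissipativity computation, although lengthy, is a direct application of Green's formula, Young's inequality and the trace bound, and the case split is precisely where hypotheses (\ref{cond_1}), (\ref{cond_1b}) and (\ref{cond_2}) are used.
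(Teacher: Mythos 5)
Your proof is correct, and its skeleton is the same as the paper's: Lumer--Phillips, with a dissipativity computation that matches the paper's line by line (Green's formula (\ref{Green}), cancellation of the normal-derivative pairing through the compatibility condition (\ref{domainA3}), integration of the transport term using $z(\cdot,0)=w$, then Young's inequality, with (\ref{cond_1}) closing case 1 and the trace bound plus (\ref{cond_1b})--(\ref{cond_2}) closing case 2). The genuine difference is in the maximality step. The paper, following \cite{NPig_2011}, changes unknowns to $\overline{u}=u+\alpha v$, so that the reduced elliptic problem (\ref{equbar}), (\ref{ubar0}), (\ref{dudnu}) has principal part $-\Delta\overline{u}$ and no derivative ever falls on the datum $f_{1}$; Lax--Milgram is applied to the form in $\overline{u}$, and $u$, $v$, $z$, $w$ are recovered afterwards via (\ref{solu}), (\ref{solv}), (\ref{z_formula}). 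You instead keep $u$ itself as the unknown and neutralize the resulting distributional term $\alpha\Delta f_{1}$ by leaving it in weak form, $\alpha\int_{\Omega}\nabla f_{1}\cdot\nabla\phi\,dx$, inside the right-hand-side functional $L$. The two routes are equivalent up to the affine substitution $\overline{u}=(1+\alpha)u-\alpha f_{1}$ (for $\lambda=1$): both produce continuous, coercive bilinear forms on $H_{\Gamma_{0}}^{1}(\Omega)$ (coercivity needing only $\mathrm{mes}(\Gamma_{0})>0$ and the positivity of the coefficients, with no constraint linking $\alpha$, $\mu_{1}$, $\mu_{2}$), and both recover $u+\alpha v\in E(\Delta,L^{2}(\Omega))$ and the $L^{2}(\Gamma_{1})$ normal derivative by testing first against $\mathscr{C}_{c}^{\infty}(\Omega)$ and then against general test functions via (\ref{Green}). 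What the paper's substitution buys is a clean strong formulation in which every datum appears undifferentiated, making the regularity bookkeeping transparent; what your formulation buys is that one never has to introduce and invert the auxiliary unknown. Two minor points of comparison: you solve $(I-\mathscr{A})V=F$ only for $\lambda=1$, which indeed suffices for Lumer--Phillips (the paper sets up $\lambda I-\mathscr{A}$ for general $\lambda>0$ but uses it the same way); and, like the paper, you dismiss the density of $\mathscr{D}(\mathscr{A})$ as routine, which is acceptable since a dissipative operator on a Hilbert space whose range under $I-\mathscr{A}$ is all of $\mathscr{H}$ is automatically densely defined.
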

\begin{proof} To prove Theorem \ref{existence_u}, we use the
Lumer-Phillips' theorem.
For this purpose, we show firstly that the operator $\mathscr{A}$ is
dissipative.
Indeed, let $V=(u,v,w,z)^{T}\in \mathscr{D}\left(\mathscr{A}\right) $. We have
\begin{equation*}
\begin{array}{ll}
\displaystyle \left\langle \mathscr{A}V,V\right\rangle_{\mathscr{H}} =& \displaystyle \int_{\Omega}\nabla u.\nabla v dx + \int_{\Omega }v\left( \Delta u+\alpha \Delta v\right) dx  \\
\displaystyle &+ \displaystyle \int_{\Gamma_{1}}w\left( -\frac{\partial u}{\partial \nu }-\alpha \frac{\partial v}{\partial \nu }-\mu _{1}v-\mu _{2}z\left(\sigma,1\right)
\right) d\sigma -\dfrac{\xi }{\tau }\int_{\Gamma _{1}}\int_{0}^{1}zz_{\rho}d\rho d\sigma . \
\end{array}
\end{equation*}
But since $u+\alpha v \in E(\Delta,L^{2}(\Omega)) \mbox{ and } \displaystyle \frac{\partial (u+\alpha v)}{\partial \nu } \in L^{2}(\Gamma_{1})$,
we may apply Green's formula (\ref{Green}) where the duality pairing $\left\langle . ;. \right\rangle_{\Gamma_{1}}$  is simply the $L^{2}(\Gamma_{1})$ inner product
(because $w = \gamma_{1}(v) \in L^{2}(\Gamma_{1})$) and obtain:
\begin{equation}
\left\langle \mathscr{A}V,V\right\rangle_{\mathscr{H}} =-\mu _{1}\int_{\Gamma _{1}}w^{2}d\sigma -\mu _{2}\int_{\Gamma_{1}}z\left(\sigma,1\right) wd\sigma -\alpha \int_{\Omega }\left\vert \nabla
v\right\vert ^{2}dx-\frac{\xi }{\tau }\int_{\Gamma _{1}}\int_{0}^{1}z_{\rho
}zd\rho dx.  \label{dissipative_A}
\end{equation}
At this point, we have to distinguish the following two cases:\newline
\textbf{Case 1:} We suppose that $\mu _{2}<\mu _{1}$. Let us choose then $%
\xi $ that satisfies inequality (\ref{cond_1}). Using Young's inequality, (%
\ref{dissipative_A}) leads to
\begin{eqnarray*}
&&\left\langle \mathscr{A}V,V\right\rangle _{\mathscr{H}}+\alpha
\int_{\Omega }\left\vert \nabla v\right\vert ^{2}dx+\left( \mu _{1}-\frac{%
\xi }{2\tau }-\frac{\mu _{2}}{2}\right) \int_{\Gamma _{1}}w^{2}d\sigma \\
&&+\left( \frac{\xi }{2\tau }-\frac{\mu _{2}}{2}\right) \int_{\Gamma
_{1}}z^{2}(\sigma ,1)\,d\sigma \leq 0.
\end{eqnarray*}%
Consequently, by using (\ref{cond_1}), we deduce that
\begin{equation}  \label{diss_A}
\left\langle \mathscr{A}V,V\right\rangle _{\mathscr{H}}\leq 0.
\end{equation}

\textbf{Case 2:} We suppose that $\mu _{2}\geq\mu _{1}$ and $\alpha >(\mu
_{2}-\mu _{1})B^{2}$ . Let us choose then $\xi $ that satisfies the two
inequalities (\ref{cond_1b}) and (\ref{cond_2}). Using Young's inequality
and the definition of the constant $B$, we can again prove that the
inequality (\ref{diss_A}) holds. This means that in both cases $\mathscr{A}$
is dissipative.

\vspace*{0.25cm}

Now we  show that  $\lambda I -\mathscr{A}$ is surjective for all $\lambda > 0$.

For $ F=(f_{1},f_{2},f_{3},f_{4})^{T}\in \mathscr{H}$, let $V=(u,v,w,z)^{T}\in \mathscr{D}\left( \mathscr{A}\right) $
solution of
\begin{equation*}
\left( \lambda I-\mathscr{A}\right) V=F,
\end{equation*}%
which is:
\begin{eqnarray}
\lambda u - v &=& f_{1},\vspace{0.2cm}  \label{eqsurj1}\\
\lambda v - \Delta (u + \alpha v ) &=& f_{2}, \label{eqsurj2} \\
\lambda w + \frac{\partial (u+\alpha v)}{\partial \nu } + \mu_{1} v + \mu_{2} z(.,1) &=& f_{3}, \label{eqsurj3} \\
\lambda z + \frac{1}{\tau} z_{\rho}  &=& f_{4}.\label{eqsurj4} \
\end{eqnarray}%

To find $V=(u,v,w,z)^{T}\in \mathscr{D}\left( \mathscr{A}\right) $ solution of the system (\ref{eqsurj1}), (\ref{eqsurj2}), (\ref{eqsurj3}) and (\ref{eqsurj4}), we
proceed as in \cite{NP06}.

Suppose $u$ is determined with the appropriate regularity. Then from  (\ref{eqsurj1}), we get:
\begin{equation}
v=\lambda u-f_{1} \ .  \label{solution_v}
\end{equation}
Therefore, from the compatibility condition on $\Gamma_{1}$, (\ref{domainA3}), we determine $z(.,0)$ by:
\begin{equation}
z(x,0) = v( x) =\lambda u(x) -f_{1}(x) , \ \mbox{\ for } \, x \in \Gamma_{1}.
\label{z_solution}
\end{equation}
Thus, from  (\ref{eqsurj4}), $z$ is the solution of the linear Cauchy problem:
\begin{equation}\label{diffz}
\left\{
\begin{array}{ll}
z_{\rho} = \tau\Big(f_{4}(x) - \lambda z(x,\rho) \Big), & \mbox{ for } x \in \Gamma_{1} \,,\, \rho \in (0,1), \\
z(x,0)  =\lambda u(x) -f_{1}(x).&
\end{array}
\right.
\end{equation}
The solution of the Cauchy problem (\ref{diffz}) is given by:
\begin{equation}\label{z_formula}
z(x,\rho) = \lambda u( x) e^{-\lambda \rho \tau} -f_{1}e^{-\lambda \rho \tau }+\tau e^{-\lambda \rho \tau}\int_{0}^{\rho}f_{4}(x,\sigma) e^{\lambda \sigma \tau }d\sigma
\quad \mbox{ for } x \in \Gamma_{1} \,,\, \rho \in (0,1) .
\end{equation}
So, we have at the point $\rho = 1$,
\begin{equation} \label{z1}
z(x,1) = \lambda u(x)  e^{-\lambda \tau} + z_{1}(x), \quad \mbox{ for } x \in \Gamma_{1}
\end{equation}
with
$$
z_{1}(x) = -f_{1}e^{-\lambda \tau }+\tau e^{-\lambda  \tau}\int_{0}^{1}f_{4}(x,\sigma) e^{\lambda \sigma \tau }d\sigma,
\quad \mbox{ for } x \in \Gamma_{1} .
$$
Since $f_{1} \in H_{\Gamma_{0}}^{1}(\Omega) \mbox{ and } f_{4} \in L^{2}(\Gamma_{1}) \times L^{2}(0,1)$, then $z_{1} \in L^{2}(\Gamma_{1})$.

Consequently, knowing  $u$, we may deduce $v$ by  (\ref{solution_v}), $z$ by (\ref{z_formula}) and using (\ref{z1}), we deduce $w = \gamma_{1}(v)$ by (\ref{eqsurj3}).

In view of equations (\ref{eqsurj2}) and (\ref{eqsurj3}), we set, as in \cite{NPig_2011}, $\overline{u} = u + \alpha v$.  Then, from  (\ref{solution_v}), we have
$$ v = \lambda u - f_{1} = \lambda( \overline{u} - \alpha v) - f_{1} \ .$$
Since $\lambda > 0 \mbox{ and } \alpha >0 \,,\, 1 + \lambda \alpha \neq 0$; thus we have:
\begin{equation}\label{solv}
v = \frac{\lambda}{1 + \lambda \alpha} \overline{u} - \frac{f_{1}}{1 + \lambda \alpha} .
\end{equation}
But since $u = \overline{u} - \alpha v$, we have:
\begin{equation}\label{solu}
u = \frac{1}{1 + \lambda \alpha} \overline{u} + \frac{\alpha}{1 + \lambda \alpha} f_{1} .
\end{equation}
From  equations (\ref{eqsurj2}) and (\ref{eqsurj3}), $\overline{u}$ must satisfy:
\begin{equation} \label{equbar}
\frac{\lambda^{2}}{1 + \lambda \alpha} \overline{u} - \Delta \overline{u} = f_{2} +  \frac{\lambda}{1 + \lambda \alpha} f_{1}, \quad \mbox{in } \Omega
\end{equation}
with the boundary conditions
\begin{eqnarray}
\overline{u} = 0, & \mbox{ on } & \Gamma_{0} \label{ubar0} \\
\frac{\partial \overline{u}}{\partial \nu} = f_{3} - \lambda \gamma_{1}(v) - \mu_{1} \gamma_{1}(v) - \mu_{2} z(.,1), &\mbox{on } & \Gamma_{1} \label{ubar1}
\end{eqnarray}
the last equation at least formally since we don't have yet found the regularity of $\overline{u}$. Replacing $u$ by its expression (\ref{solu}) and inserting it in equation
(\ref{z1}), we get:
$$
z(x,1) = \frac{\lambda}{1 + \lambda \alpha} \overline{u}(x) e^{-\lambda \tau} + \frac{\lambda \alpha }{1 + \lambda \alpha} f_{1}(x) e^{-\lambda \tau} + z_{1}(x), \quad \mbox{ for } x \in \Gamma_{1} .
$$
Using the preceding expression of $z(.,1)$ and the expression of $v$ given by (\ref{solv}), we have:
\begin{equation} \label{dudnu}
\frac{\partial \overline{u}}{\partial \nu} = - \, \frac{\lambda\Big(\mu_{2} e^{-\lambda \tau}
+ (\lambda + \mu_{1}\Big)}{1 + \lambda \alpha} \overline{u} + f(x), \quad \mbox{ for  } x \in \Gamma_{1}
\end{equation}
with
$$
f(x) = f_{3}(x) + \frac{(\lambda + \mu_{1}) - \mu_{2} \lambda \alpha e^{-\lambda \tau}}{1 + \lambda \alpha}  f_{1}(x)  - \mu_{2} z_{1}(x),  \quad \mbox{ for  } x \in \Gamma_{1} \ .
$$
From the regularity of $f_{1} \,,\, f_{2} \,,\, z_{1}$, we get $f \in L^{2}(\Gamma_{1})$.

The variational formulation of problem (\ref{equbar}), (\ref{ubar0}),(\ref{dudnu}) is to find  $\overline{u} \in H_{\Gamma_{0}}^{1}(\Omega)$ such that:
\begin{eqnarray}
\int_{\Omega} \frac{\lambda^{2}}{1 + \lambda \alpha} \overline{u} \omega + \nabla \overline{u} \nabla \omega dx &+ &
\int_{\Gamma_{1}} \frac{\lambda \Big(\mu_{2} e^{-\lambda \tau} + (\lambda + \mu_{1}\Big)}{1 + \lambda \alpha} \overline{u}(\sigma)  \omega(\sigma) d \sigma, \label{varia}\\
&=&
\int_{\Omega}  \left(f_{2} +  \frac{\lambda}{1 + \lambda \alpha} f_{1}\right) \omega  dx + \int_{\Gamma_{1}} f(\sigma) \omega(\sigma) d \sigma,
\notag
\end{eqnarray}
for any $\omega \in H_{\Gamma_{0}}^{1}(\Omega)$. Since $\lambda >0 \,,\, \mu_{1} >0 \,,\, \mu_{2} >0 $, the left hand side of (\ref{varia}) defines a coercive bilinear form on $H_{\Gamma_{0}}^{1}(\Omega)$.
Thus by applying the Lax-Milgram theorem, there exists a unique $\overline{u} \in H_{\Gamma_{0}}^{1}(\Omega)$ solution of (\ref{varia}). Now, choosing
$\omega \in \mathscr{C}_{c}^{\infty}$, $\overline{u}$ is a solution of (\ref{equbar}) in the sense of distribution and therefore $\overline{u} \in E(\Delta,L^{2}(\Omega))$.
Thus using the Green's formula (\ref{Green}) in (\ref{varia}) and exploiting the equation (\ref{equbar}) on $\Omega$, we obtain finally:
$$
\int_{\Gamma_{1}} \frac{\lambda \Big(\mu_{2} e^{-\lambda \tau} + (\lambda + \mu_{1}\Big)}{1 + \lambda \alpha} \overline{u}(\sigma)  \omega(\sigma) d \sigma  +
 \left\langle\frac{\partial \overline{u}}{\partial \nu};\omega \right\rangle_{\Gamma_{1}} = \int_{\Gamma_{1}} f(\sigma) \omega(\sigma) d \sigma \ \forall \omega \in H_{\Gamma_{0}}^{1}(\Omega) \ .
$$
So $\overline{u} \in E(\Delta,L^{2}(\Omega))$ verifies (\ref{dudnu}) and by equation (\ref{solu}) and (\ref{solv}) we recover $u$ and $v$ and thus by (\ref{z_formula}), we obtain $z$ and
finally setting $w = \gamma_{1}(v)$, we have found  $V=(u,v,w,z)^{T}\in \mathscr{D}\left( \mathscr{A}\right) $
solution of $
\left( Id-\mathscr{A}\right) V=F $.

Thus, the proof of Theroem \ref{existence_u}, follows from the Lumer-Phillips' theorem.
\end{proof}


\section{Asymptotic behavior}

\subsection{Exponential stability for $\protect\mu _{2}<\protect\mu _{1}$}

In this subsection, we show that under the assumption $\mu _{2}<\mu
_{1} $, the solution of problem (\ref{wave2}) decays to the null steady
state with an exponential decay rate. For this goal, we  use the
energy method combined with the choice of a suitable Lyapunov functional.

For a positive constant $\xi $ satisfying the strict inequality (\ref{cond_1}%
), (i.e. $<$ instead of $\leq$) we define the functional energy of the
solution of problem (\ref{wave2}) as
\begin{eqnarray}
E(t)=E(t,z,u) &=&\frac{1}{2}\left[ \left\Vert \nabla u(t)\right\Vert
_{2}^{2}+\left\Vert u_{t}(t)\right\Vert _{2}^{2}+\left\Vert
u_{t}(t)\right\Vert _{2,\Gamma _{1}}^{2}\right]  \notag \\
&+&\frac{\xi }{2}\int_{\Gamma _{1}}\int_{0}^{1}z^{2}(\sigma ,\rho ,t)\,d\rho
\,d\sigma \notag\\
&=& \frac{1}{2}E_1(t)+\frac{\xi }{2}\int_{\Gamma _{1}}\int_{0}^{1}z^{2}(\sigma ,\rho ,t)\,d\rho
\,d\sigma, \label{energy}
\end{eqnarray}
where
$$E_{1}(t)= \left\Vert \nabla
u(t)\right\Vert _{2}^{2}+\left\Vert u_{t}(t)\right\Vert _{2}^{2}+\left\Vert
u_{t}(t)\right\Vert _{2,\Gamma _{1}}^{2}. $$
Let us first remark that this energy is greater than the usual one of the
solution of problem (\ref{ondes}), namely $E_1(t)$.\newline
Now, we prove that the above energy $E\left( t\right) $ is a decreasing
function along the trajectories. More precisely,
 we have the following result:

\begin{lemma}
\label{lemme1} \label{dissipativeE1} Assume that $\mu _{1}>\mu _{2}$, then
the energy defined by (\ref{energy}) is a non-increasing positive function
and there exists a positive constant $C$ such that for $(u,z)$ solution of (\ref%
{wave2}), and for any $t \geq 0$, we have:
\begin{equation}
\frac{dE\left( t\right) }{dt}\leq -C\left[\int_{\Gamma_{1}}u_{t}^{2}(%
\sigma,t) \; d\sigma +\int_{\Gamma _{1}}z^{2}(\sigma,1,t) \,d\sigma \right]-
\alpha \int_{\Omega }\left\vert \nabla u_{t}(x,t) \right\vert^{2} \;dx \;.
\label{dissipationE}
\end{equation}
\end{lemma}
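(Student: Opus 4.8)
The plan is to compute $\frac{dE(t)}{dt}$ directly by differentiating the energy functional (\ref{energy}) and using the equations of the equivalent system (\ref{wave2}), then to show that the sign of each resulting boundary term can be controlled under the assumption $\mu_1 > \mu_2$ with the chosen weight $\xi$ satisfying (\ref{cond_1}). The key observation is that the computation mirrors almost exactly the dissipativity calculation for $\mathscr{A}$ already carried out in (\ref{dissipative_A})--(\ref{diss_A}), since $E(t) = \frac{1}{2}\langle V, V\rangle_{\mathscr{H}}$ for the solution $V$ of (\ref{Matrix_problem}), and formally $\frac{dE}{dt} = \langle \mathscr{A}V, V\rangle_{\mathscr{H}}$. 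So the bulk of the work is already done in the proof of Theorem \ref{existence_u}; here I only need to produce an explicit negative constant $C$ rather than merely the sign $\leq 0$.

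First I would differentiate term by term. Differentiating $\frac12\|\nabla u\|_2^2$, $\frac12\|u_t\|_2^2$ and using the interior equation $u_{tt} = \Delta u + \alpha \Delta u_t$ together with Green's formula (\ref{Green}), the interior integrals produce a boundary contribution on $\Gamma_1$ plus the strong-damping term $-\alpha\|\nabla u_t\|_2^2$. Differentiating $\frac12\|u_t\|_{2,\Gamma_1}^2$ and substituting the dynamic boundary condition (the fourth line of (\ref{wave2})) replaces $u_{tt}$ on $\Gamma_1$ by $-\bigl(\frac{\partial u}{\partial\nu} + \alpha\frac{\partial u_t}{\partial\nu} + \mu_1 u_t + \mu_2 z(\cdot,1)\bigr)$; the normal-derivative pieces cancel against the boundary term coming from Green's formula, leaving $-\mu_1\int_{\Gamma_1} u_t^2\,d\sigma - \mu_2\int_{\Gamma_1} z(\sigma,1,t)\,u_t\,d\sigma$. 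For the delay term I would differentiate $\frac{\xi}{2}\int_{\Gamma_1}\int_0^1 z^2\,d\rho\,d\sigma$ in $t$, use $\tau z_t = -z_\rho$ to convert it to a $\rho$-derivative, integrate by parts in $\rho$, and use the boundary value $z(\cdot,0,t) = u_t$ from (\ref{wave2}); this yields $-\frac{\xi}{2\tau}\int_{\Gamma_1}\bigl(z^2(\sigma,1,t) - u_t^2(\sigma,t)\bigr)d\sigma$.

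Collecting everything, $\frac{dE}{dt}$ equals $-\alpha\|\nabla u_t\|_2^2$ plus a boundary quadratic form in $u_t$ and $z(\cdot,1,t)$. Applying Young's inequality to the cross term $-\mu_2\int_{\Gamma_1} z(\sigma,1,t)\,u_t\,d\sigma \leq \frac{\mu_2}{2}\int_{\Gamma_1} u_t^2\,d\sigma + \frac{\mu_2}{2}\int_{\Gamma_1} z^2(\sigma,1,t)\,d\sigma$, the coefficients become $\bigl(\mu_1 - \frac{\mu_2}{2} - \frac{\xi}{2\tau}\bigr)$ on $\int_{\Gamma_1} u_t^2$ and $\bigl(\frac{\xi}{2\tau} - \frac{\mu_2}{2}\bigr)$ on $\int_{\Gamma_1} z^2(\sigma,1,t)$. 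Because $\xi$ satisfies the strict version of (\ref{cond_1}), namely $\tau\mu_2 < \xi < \tau(2\mu_1 - \mu_2)$, both coefficients are strictly positive; taking $C$ to be their minimum gives (\ref{dissipationE}) and in particular shows $E$ is non-increasing. Positivity of $E$ is immediate from its definition as a sum of squares. The one point requiring a little care — and the main technical obstacle — is justifying the formal differentiation and the use of Green's formula for solutions that a priori only lie in $C(\mathbb{R}_+;\mathscr{H})$; I would first establish the estimate for strong solutions with $V_0 \in \mathscr{D}(\mathscr{A})$, where all the manipulations are licit, and then extend it to general $V_0 \in \mathscr{H}$ by the density of $\mathscr{D}(\mathscr{A})$ and continuous dependence on initial data guaranteed by Theorem \ref{existence_u}.
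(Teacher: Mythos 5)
Your proposal is correct and follows essentially the same route as the paper: the authors also multiply the wave equation by $u_t$, treat the transport equation by multiplying by $\xi z$ and integrating in $\rho$, and then invoke exactly the Young's-inequality computation from the dissipativity of $\mathscr{A}$, with the strict form of (\ref{cond_1}) turning the coefficients $\mu_1 - \frac{\xi}{2\tau} - \frac{\mu_2}{2}$ and $\frac{\xi}{2\tau} - \frac{\mu_2}{2}$ into a strictly positive constant $C$. Your added remark about first proving the identity for $V_0 \in \mathscr{D}(\mathscr{A})$ and extending by density is a justification the paper leaves implicit, but it does not change the substance of the argument.
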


\begin{proof}We multiply the first equation in (\ref{wave2}) by $u_{t}$
and perform integration by parts to get:
\begin{equation}  \label{Etmp}
\begin{array}{ll}
\displaystyle \frac{1}{2}\frac{d}{dt}\left[ \left\Vert \nabla u(t)
\right\Vert_{2}^{2}+\left\Vert u_{t}(t) \right\Vert_{2}^{2}+ \left\Vert
u_{t}(t) \right\Vert _{2,\Gamma _{1}}^{2}\right] +\alpha \left\Vert \nabla
u_{t}(t) \right\Vert_{2}^{2} &  \\
\displaystyle +\mu _{1}\left\Vert u_{t}(t) \right\Vert
_{2,\Gamma_{1}}^{2}+\mu_{2}\int_{\Gamma_{1}}
u_{t}(\sigma,t)u_{t}(\sigma,t-\tau) d\sigma =0 \; . &
\end{array}%
\end{equation}
We multiply the third equation in (\ref{wave2}) by $\xi z$, integrate the
result over $\Gamma _{1}\times (0,1)$, we obtain:
\begin{eqnarray}
\frac{\xi }{\tau }\int_{\Gamma _{1}}\int_{0}^{1}z_{\rho }z(\sigma,\rho,t) \,
d\rho \, d\sigma &=&\frac{\xi }{2\tau}\int_{\Gamma _{1}} \int_{0}^{1}\frac{%
\partial }{\partial \rho }z^{2}(\sigma,\rho ,t) \, d\rho \, d\sigma  \notag
\\
&=&\frac{\xi }{2\tau}\int_{\Gamma_{1}}\left(
z^{2}(\sigma,1,t)-z^{2}(\sigma,0,t) \right) d\sigma \;.
\label{Energy_second_equation}
\end{eqnarray}
Using the definition (\ref{change-variable}) of $z$ in the equality (\ref{Etmp}) and using the same technique as in the  first step  of the proof of
Theorem \ref{existence_u}, where we proved that $\mathscr{A}$ is dissipative, inequality (\ref{dissipationE}) holds.
\end{proof}
The asymptotic stability result reads as follows:

\begin{theorem}
\label{exponential1} Assume that $\mu _{2}<\mu _{1}$. Then there exist two
positive constants $C$ and $\gamma $ independent of $t$ such that for $(u,z)$
solution of problem (\ref{wave2}), we have:
\begin{equation}
E(t)\leq Ce^{-\gamma t},\quad \forall \,t\geq 0\;.  \label{decay1}
\end{equation}
\end{theorem}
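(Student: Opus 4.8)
The plan is to prove \eqref{decay1} by the Lyapunov method: I would build a functional $\mathcal{L}$ equivalent to the energy $E$ of \eqref{energy} and show that it obeys a differential inequality $\mathcal{L}'(t)\le-\gamma\,\mathcal{L}(t)$, from which the exponential decay follows by integration and the equivalence. Since Lemma~\ref{lemme1} yields dissipation only for the boundary quantities $\int_{\Gamma_1}u_t^2$, $\int_{\Gamma_1}z^2(\sigma,1,t)$ and for the strong damping $\alpha\int_\Omega|\nabla u_t|^2$, the functional must manufacture negative contributions for the remaining parts of $E$, namely $\|\nabla u\|_2^2$, $\|u_t\|_2^2$ and $\int_{\Gamma_1}\int_0^1 z^2$. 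I would therefore set
$$\mathcal{L}(t)=N\,E(t)+\varepsilon_1\,\Phi(t)+\varepsilon_2\,\Psi(t),$$
with $N$ large and $\varepsilon_1,\varepsilon_2$ small positive constants, where
$$\Phi(t)=\int_\Omega u\,u_t\,dx+\int_{\Gamma_1}u\,u_t\,d\sigma,\qquad \Psi(t)=\int_{\Gamma_1}\int_0^1 e^{-2\tau\rho}z^2(\sigma,\rho,t)\,d\rho\,d\sigma.$$
All computations would first be carried out for strong solutions $V_0\in\mathscr{D}(\mathscr{A})$, the general case $V_0\in\mathscr{H}$ following by density.

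Differentiating $\Phi$ along \eqref{wave2}, using $u_{tt}=\Delta u+\alpha\Delta u_t$ in $\Omega$, the Green formula \eqref{Green}, and the dynamic boundary condition on $\Gamma_1$, the boundary terms $\int_{\Gamma_1}u\,(\partial u/\partial\nu)$ and $\alpha\int_{\Gamma_1}u\,(\partial u_t/\partial\nu)$ cancel, leaving
$$\Phi'(t)=-\|\nabla u\|_2^2+\|u_t\|_2^2+\|u_t\|_{2,\Gamma_1}^2-\alpha\int_\Omega\nabla u\cdot\nabla u_t\,dx-\mu_1\int_{\Gamma_1}u\,u_t\,d\sigma-\mu_2\int_{\Gamma_1}u\,z(\sigma,1,t)\,d\sigma.$$
This supplies the decisive term $-\|\nabla u\|_2^2$. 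The cross terms are absorbed by Young's inequality: $\alpha\int_\Omega\nabla u\cdot\nabla u_t$ against $\|\nabla u\|_2^2$ and $\|\nabla u_t\|_2^2$, while the two boundary products are handled with the trace inequality $\|u\|_{2,\Gamma_1}\le B\|\nabla u\|_2$, yielding a small multiple of $\|\nabla u\|_2^2$ plus multiples of $\|u_t\|_{2,\Gamma_1}^2$ and $\int_{\Gamma_1}z^2(\sigma,1,t)$, all of which the dissipation controls. The genuinely new point is the positive bulk term $\|u_t\|_2^2$: here I would use that $u=0$ on $\Gamma_0$ forces $u_t=0$ on $\Gamma_0$, so $u_t\in H^1_{\Gamma_0}(\Omega)$ and Poincaré gives $\|u_t\|_2^2\le C_P\|\nabla u_t\|_2^2$, which the strong-damping term $-N\alpha\|\nabla u_t\|_2^2$ dominates for $N$ large.

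For $\Psi$, differentiating and using $\tau z_t=-z_\rho$ from \eqref{wave2}, then integrating by parts in $\rho$, the exponential weight produces
$$\Psi'(t)=-2\int_{\Gamma_1}\int_0^1 e^{-2\tau\rho}z^2\,d\rho\,d\sigma-\frac{e^{-2\tau}}{\tau}\int_{\Gamma_1}z^2(\sigma,1,t)\,d\sigma+\frac1\tau\int_{\Gamma_1}u_t^2(\sigma,t)\,d\sigma,$$
where I used the compatibility condition $z(\sigma,0,t)=u_t(\sigma,t)$. Since $e^{-2\tau\rho}\ge e^{-2\tau}$ on $(0,1)$, the first term dominates a negative multiple of $\int_{\Gamma_1}\int_0^1 z^2$, which is exactly the missing dissipation for the delay component of $E$; the $z(\sigma,1,t)$ term is a bonus, and the bad term $\int_{\Gamma_1}u_t^2$ is again absorbed by Lemma~\ref{lemme1}.

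Assembling $\mathcal{L}'=N E'+\varepsilon_1\Phi'+\varepsilon_2\Psi'$ and collecting coefficients, I would first fix the Young parameters small (so the coefficient of $\|\nabla u\|_2^2$ is strictly negative), then choose $\varepsilon_1,\varepsilon_2$ small, and finally $N$ large enough that $\int_{\Gamma_1}u_t^2$, the strong-damping term $\|\nabla u_t\|_2^2$ (controlling $\|u_t\|_2^2$ via Poincaré), and the delay terms all acquire net negative coefficients, while simultaneously $\tfrac12 N E\le\mathcal{L}\le\tfrac32 N E$ (using $|\Phi|\le C E$ and $0\le\Psi\le\tfrac2\xi E$). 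This gives $\mathcal{L}'(t)\le-\beta E(t)\le-\gamma\,\mathcal{L}(t)$, and hence \eqref{decay1}. The main obstacle is precisely this simultaneous balancing of constants across all energy components; in particular, one must check that the margin provided by $\mu_2<\mu_1$ (which, through \eqref{cond_1}, makes $E$ a genuine energy and gives the boundary dissipation of Lemma~\ref{lemme1} its definite sign) suffices to absorb the $\mu_1,\mu_2$ boundary cross terms created by $\Phi'$.
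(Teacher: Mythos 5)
Your proposal is correct and follows essentially the same route as the paper: the same Lyapunov functional built from $E(t)$, the cross terms $\int_\Omega u\,u_t\,dx+\int_{\Gamma_1}u\,u_t\,d\sigma$, and the exponentially weighted delay integral $\int_{\Gamma_1}\int_0^1 e^{-2\tau\rho}z^2\,d\rho\,d\sigma$, with the same use of Lemma~\ref{lemme1}, the same identities for $\Phi'$ and $\Psi'$, the same Young/trace/Poincar\'e estimates, and the same final ordering of the choice of constants. The only minor difference is that the paper additionally includes $\frac{\varepsilon\alpha}{2}\|\nabla u\|_2^2$ in $L(t)$ so that the cross term $\alpha\int_\Omega\nabla u\cdot\nabla u_t\,dx$ cancels exactly, whereas you estimate it by Young's inequality and absorb it into the strong damping; both work.
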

\begin{proof} The proof of Theorem \ref{exponential1} relies on the
construction of a Lyapunov functional.

For a small positive constant $\varepsilon $ to be chosen later, we define:
\begin{eqnarray}
L(t) =E(t)&+&\varepsilon \int_{\Omega }u(x,t) u_{t}(x,t) \; dx + \varepsilon
\int_{\Gamma_{1}}u(\sigma,t)u_{t}(\sigma,t) \;d\sigma  \notag \\
&+&\frac{\varepsilon \alpha }{2}\int_{\Omega }\left\vert\nabla
u(x,t)\right\vert^{2} \;dx  \label{Lt} \\
&+& \textcolor{black}{\varepsilon \xi \int_{\Gamma_{1}}\int_{0}^{1}e^{-2\tau \rho}z^{2}(\sigma,\rho ,t)\;d\rho \;d\sigma} .  \notag
\end{eqnarray}%
Let us say that the introduction of the last term in the Lyapunov functional $L$ is inspired by the work of Nicaise and Pignotti
\cite{NPig_2011}.

It is straightforward to see that for $\varepsilon > 0$, $L(t)$ and $E(t)$
are equivalent in the sense that there exist two positive constants $\beta
_{1}$ and $\beta _{2}>0$ depending on $\varepsilon $ such that for all $%
t\geq 0$%
\begin{equation}
\beta_{1}E(t)\leq L(t)\leq \beta_{2}E(t) \; .  \label{equivalEL1}
\end{equation}%
By taking the time derivative of the function $L$ defined by (\ref{Lt}),
using the equations in problem (\ref{wave2}), several integration by parts, and
exploiting  (\ref{dissipationE}), we get:
\begin{eqnarray}
\frac{dL(t)}{dt} \leq&-&C\left[ \int_{\Gamma _{1}}u_{t}^{2}(\sigma,t)
\;d\sigma +\int_{\Gamma _{1}}z^{2}(\sigma,1,t) \;d\sigma\right]  \notag \\
&-&\alpha \Vert \nabla u_{t}\Vert_{2}^{2} - \varepsilon \Vert \nabla
u\Vert_{2}^{2} + \varepsilon \Vert u_{t}\Vert_{2}^{2} +\varepsilon \Vert
u_{t}\Vert_{2,\Gamma_{1}}^{2}  \notag \\
&-&\varepsilon \mu_{1}\int_{\Gamma_{1}}u_{t}(\sigma,t) u(\sigma,t) \;d\sigma
- \varepsilon \mu _{2}\int_{\Gamma_{1}}z(\sigma,1,t) u( \sigma,t) d\sigma
\label{dLdt1} \\
&+&\textcolor{black}{\varepsilon \xi\, \frac{d}{dt}\left(
\int_{\Gamma_{1}}\int_{0}^{1}e^{-2 \tau \rho}z^{2}(\sigma,\rho,t) \; d\rho \;
d\sigma \right)} .  \notag
\end{eqnarray}

\textcolor{black}{
By using the second equation in (\ref{wave2}), the last term in (\ref{dLdt1}%
) can be treated as follows:
\begin{eqnarray*}
\varepsilon \xi\,\frac{d}{dt}\left(
\int_{\Gamma_{1}}\int_{0}^{1}e^{-2\tau\rho }z^{2}(\sigma,\rho ,t) \;d\rho
\;d\sigma \right) 
&=&-\frac{ 2 \varepsilon \xi }{\tau}\int_{\Gamma_{1}}\int_{0}^{1}e^{-2 \tau\rho }z(\sigma,\rho ,t)z_{\rho}(\sigma,\rho ,t)\;d\rho \;d\sigma \\
&=&-\frac{\varepsilon \xi }{\tau}\int_{\Gamma_{1}}\int_{0}^{1}e^{-2 \tau\rho }\frac{\partial }{\partial
\rho }z^{2}(\sigma,\rho,t)\;d\rho \;d\sigma .
\end{eqnarray*}%
Then, by using an integration by parts and the definition of $z$, the above formula can be rewritten
as:
\begin{eqnarray}
\varepsilon \xi \frac{d}{dt}\left( \int_{\Gamma_{1}}\int_{0}^{1}e^{-2\tau\rho}z^{2}(\sigma,\rho,t)\;d\rho \;d\sigma \right)  
&=&-\frac{\varepsilon \xi }{\tau}e^{-2\tau}\int_{\Gamma_{1}}z^{2}(1,\rho,t)\;d\rho \;d\sigma + 
\frac{\varepsilon \xi }{\tau }\int_{\Gamma_{1}}u_{t}^{2}(\sigma,t)\;d\sigma  \label{dLdt_term2} \\
&&-2 \varepsilon \xi \int_{\Gamma_{1}}\int_{0}^{1}e^{-2\tau \rho }z^{2}(\sigma,\rho ,t)\;d\rho
\;d\sigma \; \notag.  
\end{eqnarray}
}
Applying Young's inequality, and the trace inequality, we obtain, for any $%
\delta >0$:
\begin{eqnarray}
\left\vert \int_{\Gamma_{1}}u_{t}ud\sigma \right\vert &\leq &\delta \Vert
u\Vert_{2,\Gamma _{1}}^{2}+\frac{1}{4\delta }\Vert u_{t}\Vert
_{2,\Gamma_{1}}^{2}  \notag \\
&\leq &\delta B^{2}\Vert \nabla u\Vert _{2}^{2}+\frac{1}{4\delta }\Vert
u_{t}\Vert_{2,\Gamma _{1}}^{2} .  \label{Young1}
\end{eqnarray}%
Similarly, we have
\begin{equation}
\left\vert \int_{\Gamma _{1}}z(\sigma,1,t) u(\sigma,t) \;d\sigma \right\vert
\leq \frac{1}{4\delta}\int_{\Gamma _{1}}z^{2}(\sigma,1,t) \;d\sigma +\delta
B^{2}\Vert \nabla u\Vert _{2}^{2}  .  \label{Young2}
\end{equation}%
Inserting (\ref{dLdt_term2}),~(\ref{Young1}) and (\ref{Young2}) into (\ref%
{dLdt1}) and using 
\textcolor{black}{
Poincar\'e's inequality for $u_{t}$, in which we denote $C(\Omega)$ the Poincar\'e's constant, namely :
\begin{equation*}
\forall w \in H_{\Gamma_{0}}^{1}\left( \Omega \right) \,,\, \Vert w \Vert_{2} \leq C(\Omega) \Vert \nabla w \Vert_{2} \;
\end{equation*}
}
we have:
\begin{eqnarray}
\frac{dL(t)}{dt} &\leq &-\left[ C-\varepsilon \left(1+\frac{\xi }{\tau }+\frac{\mu _{1}}{4\delta }\right) \right] \Vert u_{t}\Vert _{2,\Gamma_{1}}^{2}
\notag \\
&&-\left[ C-\varepsilon \left( \frac{\xi }{\tau }e^{-2\tau}+\frac{\mu _{2}}{4\delta }\right) \right] \int_{\Gamma_{1}}z^{2}(\sigma,1,t) \;d\sigma  \notag
\\
&&-\left( \alpha -\varepsilon 
\textcolor{black}{
C(\Omega)^{2}
}\right) \Vert \nabla u_{t}\Vert_{2}^{2}-
\varepsilon \Bigl( 1-B^{2}\delta \left( \mu _{1}+\mu_{2}\right)\Bigl) \Vert \nabla u\Vert_{2}^{2}  \notag \\
&&\textcolor{black}{-2 \varepsilon \xi \int_{\Gamma_{1}}\int_{0}^{1}e^{-2\tau \rho }z^{2}(\sigma,\rho ,t)\;d\rho \;d\sigma} .  \label{dLdt_2}
\end{eqnarray}
%
We choose now $\delta $ small enough in (\ref{dLdt_2}) such that
\begin{equation*}
\delta <\frac{1}{B^{2}\left( \mu _{1}+\mu _{2}\right) } \;.
\end{equation*}
Once $\delta $ is fixed, using once again Poincar\'{e}'s inequality in (\ref%
{dLdt_2}), we may pick $\varepsilon $ small enough to obtain the existence
of $\eta >0$, such that:
\begin{equation}
\frac{dL(t)}{dt}\leq -\eta \varepsilon E(t) ,\quad \forall t\geq 0\;.
\label{dLdt_3}
\end{equation}%
On the other hand, by virtue of (\ref{equivalEL1}), setting $\gamma =-\eta
\varepsilon /\beta _{2}$, the last inequality becomes:
\begin{equation}
\frac{dL(t)}{dt}\leq -\gamma L(t)\;,\quad \forall t\geq 0\; .  \label{diffineq}
\end{equation}%
Hence, integrating the previous differential inequality (\ref{diffineq}) between
$0$ and $t$, we get
\begin{equation*}
L(t)\leq C_{\ast}e^{-\gamma t}\;,\quad \forall t\geq 0,
\end{equation*}
for some positive constant $C_{\ast}$.

Consequently, by using (\ref{equivalEL1}) once again, we conclude that it
exists $C > 0$ such that:
\begin{equation*}
E(t)\leq Ce^{-\gamma t}\;,\quad \forall t\geq 0\;.
\end{equation*}%
This completes the proof of Theorem \ref{exponential1} .
\end{proof}


\subsection{Exponential stability for $\protect\mu _{2}>\protect\mu _{1}$
and $\protect\alpha>(\protect\mu _{2} - \protect\mu _{1})B^{2}$}

As, we have said in the Introduction, and it is
clearly observed in Theorem \ref{exponential1}, that the strong internal
damping compensates the destabilizing effect of the delay in the boundary
condition.

In this section, we assume that $\mu_{2} > \mu _{1}$ and $\alpha > (\mu _{2}
- \mu _{1})B^{2}$. As we will see, we cannot directly perform the same proof
as for the case where $\mu_{2} \leq \mu _{1}$, since the boundary delay term
$\mu_{2} u_{t}(x,t-\tau)$ is greater than the normal one $\mu_{1} u_{t}(x,t)$,
i.e. ($\mu_2\geq \mu_1$). So we have to control this term by the damping term $\alpha \Delta u_{t}$
in the equation.
\begin{remark}\label{Remark_mu_1_mu_2}
In the case 2, namely  $\mu_{2} > \mu _{1}$ the condition  $\alpha > (\mu_{2} - \mu _{1})B^{2}$ permits us to find $\xi$ satisfying (\ref{cond_1b})-(\ref{cond_2}).
This choice of  $\xi$ is essential in the proofs of  Lemma \ref{lemme2} and  Theorem \ref{exponential2} below.
\end{remark}
\begin{lemma}
\label{lemme2} \label{dissipativeE2} Assume that $\mu_{2} > \mu _{1}$ and $%
\alpha > (\mu _{2} - \mu _{1})B^{2}$. For any $\xi$ satisfying (\ref{cond_1b}%
)-(\ref{cond_2}), the energy defined by (\ref{energy}) is a non-increasing
positive function and there exists a positive constant $\kappa$ such that
for $(u,z)$ solution of (\ref{wave2}), and for any $t \geq 0$, we have:
\begin{equation}
\frac{dE(t)}{dt}\leq -\kappa \left[ \int_{\Gamma_{1}}z^{2}(\sigma,1,t)
\,d\sigma +\int_{\Omega }\left\vert \nabla u_{t}(x,t) \right\vert ^{2}\, dx%
\right] \;.  \label{dEdt_2}
\end{equation}
\end{lemma}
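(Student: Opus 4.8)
The plan is to mirror the proof of Lemma \ref{lemme1}, differentiating the augmented energy $E(t)$ along the trajectories and then using the strong damping $\alpha\|\nabla u_t\|_2^2$ to absorb the destabilizing boundary term produced by the delay. First I would reproduce the two computations already used for the case $\mu_2<\mu_1$: multiplying the first equation of (\ref{wave2}) by $u_t$ and integrating by parts gives identity (\ref{Etmp}), while multiplying the transport equation for $z$ by $\xi z$ and integrating over $\Gamma_1\times(0,1)$ gives (\ref{Energy_second_equation}). Adding these and using the definition (\ref{change-variable}) of $z$, so that $z(\sigma,0,t)=u_t(\sigma,t)$ and $z(\sigma,1,t)=u_t(\sigma,t-\tau)$, would yield the exact identity
\begin{equation*}
\frac{dE(t)}{dt} = -\alpha\|\nabla u_t\|_2^2 + \left(\frac{\xi}{2\tau}-\mu_1\right)\int_{\Gamma_1}u_t^2\,d\sigma - \mu_2\int_{\Gamma_1}u_t(\sigma,t)\,z(\sigma,1,t)\,d\sigma - \frac{\xi}{2\tau}\int_{\Gamma_1}z^2(\sigma,1,t)\,d\sigma .
\end{equation*}

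The next step is to bound the indefinite cross term by Young's inequality, $-\mu_2\int_{\Gamma_1}u_t\,z(\sigma,1,t)\,d\sigma \leq \frac{\mu_2}{2}\int_{\Gamma_1}u_t^2\,d\sigma + \frac{\mu_2}{2}\int_{\Gamma_1}z^2(\sigma,1,t)\,d\sigma$, after which the coefficient of $\int_{\Gamma_1}u_t^2$ becomes $\frac{\mu_2}{2}+\frac{\xi}{2\tau}-\mu_1$ and that of $\int_{\Gamma_1}z^2(\sigma,1,t)$ becomes $\frac{\mu_2}{2}-\frac{\xi}{2\tau}$. By (\ref{cond_1b}) the latter is $\leq 0$, so the delayed boundary term is already controlled. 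The crucial difference with Lemma \ref{lemme1} is that now, because $\mu_2>\mu_1$, the coefficient $\frac{\mu_2}{2}+\frac{\xi}{2\tau}-\mu_1$ of $\int_{\Gamma_1}u_t^2$ is strictly positive; there is no boundary damping left to kill it, and this is the main obstacle of the lemma.

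This is exactly where the strong internal damping enters. I would invoke the trace estimate $\|u_t\|_{2,\Gamma_1}^2\leq B^2\|\nabla u_t\|_2^2$ (legitimate since $u_t\in H_{\Gamma_0}^1(\Omega)$) to convert the offending boundary term into a multiple of $\|\nabla u_t\|_2^2$, obtaining
\begin{equation*}
\frac{dE(t)}{dt} \leq -\left[\alpha - \left(\frac{\mu_2}{2}+\frac{\xi}{2\tau}-\mu_1\right)B^2\right]\|\nabla u_t\|_2^2 + \left(\frac{\mu_2}{2}-\frac{\xi}{2\tau}\right)\int_{\Gamma_1}z^2(\sigma,1,t)\,d\sigma .
\end{equation*}
Condition (\ref{cond_2}) makes the bracketed coefficient of $\|\nabla u_t\|_2^2$ strictly positive; this is precisely the computation that established the dissipativity of $\mathscr{A}$ in Case 2 of the proof of Theorem \ref{existence_u}. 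Taking $\kappa$ to be the minimum of $\alpha-\left(\frac{\mu_2}{2}+\frac{\xi}{2\tau}-\mu_1\right)B^2$ and $\frac{\xi}{2\tau}-\frac{\mu_2}{2}$ then gives (\ref{dEdt_2}), and the positivity and non-increase of $E(t)$ are clear from (\ref{energy}) (sum of squares, $\xi>0$) together with the sign of the right-hand side.

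The one technical delicacy I anticipate is the extremal value $\xi=\tau\mu_2$ allowed by (\ref{cond_1b}): there the coefficient $\frac{\mu_2}{2}-\frac{\xi}{2\tau}$ vanishes and $\kappa$ degenerates. Since (\ref{cond_2}) holds with strict inequality and reduces at $\xi=\tau\mu_2$ exactly to the hypothesis $\alpha>(\mu_2-\mu_1)B^2$, by continuity one may either choose $\xi$ strictly larger than $\tau\mu_2$ while preserving (\ref{cond_2}), or apply Young's inequality with a parameter $\theta>1$ slightly above $1$, so that the $z^2(\sigma,1,t)$ coefficient becomes strictly negative without spoiling the sign of the $\|\nabla u_t\|_2^2$ coefficient. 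Either adjustment makes both coefficients strictly negative, which is all that is needed to produce a positive constant $\kappa$.
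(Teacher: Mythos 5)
Your proof is correct and follows essentially the same route as the paper's: the same energy identity (\ref{dEdt_3}) obtained from (\ref{Etmp}) and (\ref{Energy_second_equation}), the same Young's inequality on the cross term, and the same absorption of the resulting positive boundary term into $-\alpha \Vert \nabla u_t\Vert_2^2$ via the trace constant $B$, with (\ref{cond_1b})--(\ref{cond_2}) supplying the sign conditions. Your closing remark on the endpoint $\xi = \tau\mu_2$ is in fact a refinement of the paper, whose argument there only gives a vanishing coefficient for the $z^2(\sigma,1,t)$ term; note that of your two proposed fixes, only the one using a Young parameter $\theta>1$ proves the lemma for that same $\xi$ (the energy itself depends on $\xi$, so replacing $\xi$ changes the statement), and it does so correctly since (\ref{cond_2}) is strict.
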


\begin{proof} Let us first recall from
(\ref{Etmp}) and (\ref{Energy_second_equation}) the following identity
\begin{eqnarray}
\frac{dE(t)}{dt} &=&-\alpha \int_{\Omega }\left\vert \nabla
u_{t}\right\vert^{2}dx-\left(\mu_{1}-\frac{\xi }{2\tau}\right)
\int_{\Gamma_{1}}u_{t}^{2}(\sigma,t) \;d\sigma  \notag \\
&&-\frac{\xi}{2\tau}\int_{\Gamma_{1}}z^{2}(\sigma,1,t) \;d\sigma -
\mu_{2}\int_{\Gamma_{1}}u_{t}(\sigma,t) z(\sigma,1,t) \;d\sigma \;.
\label{dEdt_3}
\end{eqnarray}%
Now, using Young's inequality, then (\ref{dEdt_3}) takes the form:
\begin{eqnarray}
\frac{dE(t)}{dt} &\leq&-\alpha \int_{\Omega}\left\vert \nabla
u_{t}\right\vert^{2} \; dx- \left(\mu_{1}-\frac{\xi}{2\tau}-\frac{\mu_{2}}{2}%
\right) \int_{\Gamma_{1}}u_{t}^{2}(\sigma,t) \;d\sigma  \notag \\
&&-\left( \frac{\xi}{2\tau}-\frac{\mu_{2}}{2}\right)
\int_{\Gamma_{1}}z^{2}(\sigma,1,t) \; d\sigma .  \label{dEdt_4}
\end{eqnarray}%
Since,
\begin{equation*}
\mu_{1}-\frac{\xi}{2\tau}-\frac{\mu _{2}}{2} < 0 ,
\end{equation*}
then, using the trace inequality, we obtain:
\begin{eqnarray*}
\frac{dE(t)}{dt} &\leq&\left(-\alpha - B^{2} \left(\mu_{1}-\frac{\xi}{2\tau}-%
\frac{\mu_{2}}{2}\right)\right) \int_{\Omega}\left\vert \nabla
u_{t}\right\vert^{2} \; dx \\
&&-\left( \frac{\xi}{2\tau}-\frac{\mu_{2}}{2}\right)
\int_{\Gamma_{1}}z^{2}(\sigma,1,t) \; d\sigma .
\end{eqnarray*}
Using the two inequalities (\ref{cond_1b})-(\ref{cond_2}) that $\xi$ satisfy
, we may find $\kappa > 0$ such that the inequality (\ref{dEdt_2}) holds.
\end{proof}
We can now state that under the same assumption as in Lemma \ref{lemme2},
the system (\ref{wave2}) is also exponentially stable. The second stability
result reads as follows:

\begin{theorem}
\label{exponential2} Assume that $\mu_{2} > \mu _{1}$ and $\alpha > (\mu
_{2} - \mu _{1})B^{2}$. For any $\xi$ satisfying (\ref{cond_1b})-(\ref%
{cond_2}), there exist two positive constants $\overline{C}$ and $\overline{%
\gamma}$ independent of $t$ such that for $(u,z)$ solution of problem (\ref%
{wave2}), we have:
\begin{equation}
E(t)\leq \overline{C}e^{-\overline{\gamma}t},\ \forall \,t\geq 0\;.
\label{decay_2}
\end{equation}
\end{theorem}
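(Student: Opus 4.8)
The plan is to reuse verbatim the Lyapunov functional $L(t)$ defined in (\ref{Lt}) from the proof of Theorem \ref{exponential1}, together with the equivalence (\ref{equivalEL1}), namely $\beta_1 E(t)\leq L(t)\leq \beta_2 E(t)$, which continues to hold here for exactly the same reasons. The only structural change is that the energy dissipation is now supplied by Lemma \ref{lemme2} instead of Lemma \ref{lemme1}: in place of the boundary-velocity dissipation $-C\int_{\Gamma_1}u_t^2\,d\sigma$ we now only have $-\kappa\int_{\Gamma_1}z^2(\sigma,1,t)\,d\sigma$ together with the strong interior dissipation $-\kappa\int_{\Omega}|\nabla u_t|^2\,dx$, where $\kappa>0$ precisely because $\alpha>(\mu_2-\mu_1)B^2$.

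First I would differentiate $L$ along (\ref{wave2}) exactly as in the first case, arriving at the analogue of (\ref{dLdt1}) but with the leading bracket replaced by $-\kappa\bigl[\int_{\Omega}|\nabla u_t|^2\,dx+\int_{\Gamma_1}z^2(\sigma,1,t)\,d\sigma\bigr]$. The remaining $\varepsilon$-terms, namely $\varepsilon\|u_t\|_2^2$, $\varepsilon\|u_t\|_{2,\Gamma_1}^2$, the two boundary cross terms, and the contribution (\ref{dLdt_term2}) of the weighted delay integral, are produced identically, since they come only from differentiating the $\varepsilon$-part of $L$ and do not depend on which dissipation lemma is used.

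The crucial point, and the only place where the argument genuinely differs from the first case, is the handling of every boundary-velocity term. In Theorem \ref{exponential1} the terms $\varepsilon\|u_t\|_{2,\Gamma_1}^2$, the term $\frac{\varepsilon\mu_1}{4\delta}\|u_t\|_{2,\Gamma_1}^2$ coming from (\ref{Young1}), and the term $\frac{\varepsilon\xi}{\tau}\int_{\Gamma_1}u_t^2\,d\sigma$ coming from (\ref{dLdt_term2}), were all absorbed into the dissipation $-C\int_{\Gamma_1}u_t^2\,d\sigma$. Here that term is absent, so instead I would bound each of them through the trace inequality $\|u_t\|_{2,\Gamma_1}^2\leq B^2\|\nabla u_t\|_2^2$ and absorb them into $-\kappa\|\nabla u_t\|_2^2$; the interior term $\varepsilon\|u_t\|_2^2$ is likewise absorbed via Poincar\'e's inequality $\|u_t\|_2\leq C(\Omega)\|\nabla u_t\|_2$. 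Using (\ref{Young1})--(\ref{Young2}) for the two cross terms exactly as before, this yields the analogue of (\ref{dLdt_2}): the coefficient of $\|\nabla u_t\|_2^2$ becomes $-\kappa+\varepsilon\bigl[C(\Omega)^2+B^2\bigl(1+\frac{\mu_1}{4\delta}+\frac{\xi}{\tau}\bigr)\bigr]$, that of $\|\nabla u\|_2^2$ is $-\varepsilon\bigl(1-B^2\delta(\mu_1+\mu_2)\bigr)$, that of $\int_{\Gamma_1}z^2(\sigma,1,t)\,d\sigma$ is $-\kappa-\frac{\varepsilon\xi}{\tau}e^{-2\tau}+\frac{\varepsilon\mu_2}{4\delta}$, and the weighted delay integral retains the genuinely negative coefficient $-2\varepsilon\xi$.

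Finally I would fix $\delta<\frac{1}{B^2(\mu_1+\mu_2)}$ to make the $\|\nabla u\|_2^2$ coefficient negative, then choose $\varepsilon$ small enough that the $\|\nabla u_t\|_2^2$ and $z^2(\sigma,1,t)$ coefficients remain negative; combined with the term $-2\varepsilon\xi\int_{\Gamma_1}\int_0^1 e^{-2\tau\rho}z^2\,d\rho\,d\sigma$, this produces $\frac{dL}{dt}\leq-\eta\varepsilon E(t)$ for some $\eta>0$, hence $\frac{dL}{dt}\leq-\overline{\gamma}\,L(t)$ by (\ref{equivalEL1}), and (\ref{decay_2}) follows by integration and one more use of (\ref{equivalEL1}). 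The main obstacle is conceptual rather than computational: one must recognize that the missing boundary-velocity dissipation can be recovered entirely from the interior gradient dissipation through the trace inequality, which is exactly what the hypothesis $\alpha>(\mu_2-\mu_1)B^2$ guarantees, since it is what makes $\kappa>0$ in Lemma \ref{lemme2}.
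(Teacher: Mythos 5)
Your proposal is correct and follows essentially the same route as the paper: the same Lyapunov functional $L(t)$ with the weighted delay integral, the dissipation inequality of Lemma \ref{lemme2} in place of Lemma \ref{lemme1}, absorption of all boundary-velocity terms into $-\kappa\Vert\nabla u_t\Vert_2^2$ via the trace inequality (and of $\varepsilon\Vert u_t\Vert_2^2$ via Poincar\'e), and the same order of choosing first $\delta<\frac{1}{B^2(\mu_1+\mu_2)}$ and then $\varepsilon$ small. Your constant bookkeeping (e.g.\ $\frac{B^2\mu_1}{4\delta}$ and $C(\Omega)^2$ where the paper's inequality (\ref{dL2dt}) writes $\frac{\mu_1}{4\delta}$ and $1$) is in fact slightly more careful than the paper's, but the argument is the same.
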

{\it Proof  of Theorem \ref{exponential2}}. We use the same Lyapunov function as in the  previous section, namely, for a small positive constant 
$\varepsilon $ to be chosen later, we define:
\begin{eqnarray}
L(t) =E(t)&+&\varepsilon \int_{\Omega }u(x,t) u_{t}(x,t) \; dx + \varepsilon
\int_{\Gamma_{1}}u(\sigma,t)u_{t}(\sigma,t) \;d\sigma  \notag \\
&+&\frac{\varepsilon \alpha }{2}\int_{\Omega }\left\vert\nabla
u(x,t)\right\vert^{2} \;dx \notag \\
&+&\textcolor{black}{\varepsilon \xi \int_{\Gamma
_{1}}\int_{0}^{1}e^{-2\tau \rho}z^{2}(\sigma,\rho ,t)\;d\rho \;d\sigma .}  \notag
\end{eqnarray}%

By taking the time derivative of the function $L$,
using the equations in problem (\ref{wave2}), several integration by parts, and
exploiting (\ref{dEdt_2}), we get:
\begin{eqnarray}
\frac{dL(t)}{dt} \leq&-&\kappa \left[ \int_{\Gamma_{1}}z^{2}(\sigma,1,t)
\,d\sigma +\int_{\Omega }\left\vert \nabla u_{t}(x,t) \right\vert ^{2}\, dx%
\right]  \notag \\
&-&\alpha \Vert \nabla u_{t}\Vert_{2}^{2} - \varepsilon \Vert \nabla
u\Vert_{2}^{2} + \varepsilon \Vert u_{t}\Vert_{2}^{2} +\varepsilon \Vert
u_{t}\Vert_{2,\Gamma_{1}}^{2}  \notag \\
&-&\varepsilon \mu_{1}\int_{\Gamma_{1}}u_{t}(\sigma,t) u(\sigma,t) \;d\sigma
- \varepsilon \mu _{2}\int_{\Gamma_{1}}z(\sigma,1,t) u( \sigma,t) d\sigma
\label{dLdt2} \\
&+&\textcolor{black}{\varepsilon \xi\, \frac{d}{dt}\left(
\int_{\Gamma_{1}}\int_{0}^{1}e^{-2\tau \rho }z^{2}(\sigma,\rho,t) \; d\rho \;
d\sigma \right) . } \notag
\end{eqnarray}
Let us remark  that equality (\ref{dLdt_term2}), and inequalities (\ref{Young1}) and (\ref{Young2}) used in the the proof of Theorem \ref{exponential1} are still valid.
Inserting  (\ref{dLdt_term2}),~(\ref{Young1}) and (\ref{Young2}) into (\ref{dLdt2}) and using Poincar\'e's inequality, we have:
\begin{eqnarray}
\frac{dL(t)}{dt} &\leq &-\left[ \kappa -\varepsilon \left( \frac{\xi }{\tau }%
e^{-2\tau}+\frac{\mu _{2}}{4\delta }\right) \right] \int_{\Gamma_{1}}z^{2}(%
\sigma,1,t)\; d\sigma  \notag \\
&&-\left( \kappa -\varepsilon \left( B^{2}+1+\frac{\textcolor{black}{B^{2}}\xi }{\tau }+\frac{\mu_{1}%
}{4\delta }\right) \right) \Vert \nabla u_{t}\Vert_{2}^{2}  \notag \\
&&-\varepsilon \left( 1-\textcolor{black}{C(\Omega)}^{2}\delta \left( \mu_{1}+\mu _{2}\right)
\right)\Vert \nabla u\Vert_{2}^{2}  \label{dL2dt} \\
&&\textcolor{black}{-2 \varepsilon \xi 
\int_{\Gamma_{1}}\int_{0}^{1}e^{-2\tau\rho }z^{2}(\sigma,\rho ,t)\;d\rho
\;d\sigma }\;.  \notag
\end{eqnarray}
The remaining part of the proof is similar to the one of the proof of
Theorem \ref{exponential1}: by choosing firstly $\delta$ and then $\varepsilon$, we may find $\overline{\gamma} >0$ independent of $t$ such
that:
\begin{equation*}
\frac{dL(t)}{dt}\leq -\overline{\gamma} L(t)\;,\; \forall t\geq 0\; .
\end{equation*}
This inequality permits us to conclude the proof of Theorem \ref%
{exponential2}.
\findem

\begin{remark}\label{Remark_Nicaise_work}
 After our work had been submitted, we noticed that a similar problem has been already studied by
  Nicaise and Pignotti \cite{NPig_2011}. However our problem is slightly different from the one considered in \cite{NPig_2011}:
  \begin{itemize}
    \item First, we have the extra term $\mu_1 u_t$ on the boundary conditions, which makes the stability analysis independent of the strong damping $\alpha\Delta u_t,$
    for $\mu_1>\mu_2$. This is not the case in \cite{NPig_2011}. See the assumption (2.56) in \cite{NPig_2011}. Moreover, the paper \cite{NPig_2011} does not cover the case $\alpha=0$ at all.
     \item Secondly, our Lyapunov functional is different from the one used in \cite{NPig_2011}. We choose to use this one to 
    have a strong control of the boundary delay term.
  \end{itemize}
Finally, let us remark that   our assumption $(\mu_2-\mu_1)B^2<\alpha$ is exactly the same  than the one obtained in \cite[Condition (2.56)]{NPig_2011}, when $\mu_{1} = 0$.
So this work can be viewed as a continuation of the works of Nicaise and Pignotti  \cite{NPig_2011}
 
\end{remark}

\begin{ack}
The second author was supported by MIRA 2007 project of the
R\'egion Rh\^one-Alpes. This author wishes to thank Univ. de Savoie of
Chamb\'ery for its kind hospitality. Moreover, the two authors wish to thank the
referees  for their  remarks and the careful reading of the proofs presented in this paper.
\end{ack}

\bibliographystyle{siam}

\end{document}